\newtheorem{theorem}{Theorem}[section]
\newtheorem{proposition}[theorem]{Proposition}
\newtheorem{corollary}[theorem]{Corollary}
\newtheorem{example}[theorem]{Example}
\theoremstyle{definition}
\newtheorem{problem}[theorem]{Problem}
\newtheorem{definition}[theorem]{Definition}
\newtheorem{claim}[theorem]{Claim}
\newcommand{\defeq}{:=}
\newcommand{\IR}{\mathbb R}
\newcommand{\IC}{\mathbb C}
\newcommand{\IT}{\mathbb T}
\newcommand{\IZ}{\mathbb Z}
\newcommand{\IN}{\mathbb N}
\newcommand{\w}{\omega}
\newcommand{\e}{\varepsilon}
\newcommand{\Tau}{\mathcal T}
\newcommand{\cov}{\mathrm{cov}}
\newcommand{\M}{\mathcal M}
\newcommand{\A}{\mathcal A}
\newcommand{\Ra}{\Rightarrow}
\title{Detecting the real line among one-parametric topological groups}
\author{Taras Banakh, Kateryna Makarova, Oles Mazurenko}
\address{Ivan Franko National University of Lviv}
\email{t.o.banakh@gmail.com, kateryna.makarova@lnu.edu.ua, oles.mazurenko@lnu.edu.ua}
\keywords{}
\subjclass{20K45, 46B20, 52A07}
\begin{document}
	\begin{abstract} We characterize monothetic topological groups among one-parametric topological groups. In particular, we prove that a topological group of weight $<\cov(\M)$ is monothetic if and only if it is not isomorphic to the real line. This implies that the real line is a unique non-monothetic metrizable one-parametric topological group. Also we prove that the real line with the Bohr topology as a unique non-monothetic totally bounded $\IN$-scaled topological group.
\end{abstract}
\maketitle

\section{Introduction}
The real line plays a fundamental role in mathematics. It serves as a cornerstone in numerous areas of scientific research. The main reason for that is the ability to endow the real line with various mathematical structures. Hence, researching the properties of $\IR$ inside a particular category of such mathematical structures could be helpful for more advanced studies. In particular, characterizing the real line as an object of some category is a common way to conduct such research. One can find helpful characterizations in the categories of ordered fields, topological spaces, groups etc. In this paper, we obtain a characterization of the real line in the category of topological groups, which turned out to be helpful in the other research \cite{BM2} to prove that one-parametric subgroups in any strictly convex metric group are isomorphic copies of the additive group of reals.

We prove that the real line is the unique non-monothetic metrizable one-parametric topological group. Also we detect monothetic groups among one-parametric topological group, and characterize the real line endowed with the Bohr topology and a unique non-monothetic totally bounded $\IN$-scaled topological group.
\smallskip

\section{Preliminaries}
Our study mainly considers the structure of a topological group. Let us recall its definition together with the definitions of several related fundamental properties.
	\begin{definition}
		A \emph{group} is an algebraic structure $(G, +, 0)$, consisting of a set $G$, a binary operation $+: G \times G \to G$  and an identity element $0$, satisfying the following axioms:
		\begin{enumerate}
			\item $\forall x,y,z\in G\;\;(x + y)+z = x+(y+z)$, \hfill (associativity)
			\item $\forall x\in G\;\;x+0 = x=0+x$, \hfill (identity)
			\item $\forall x\in G\;\exists y\in G\;\;x+y=0=y+x$. \hfill (inverse)
		\end{enumerate} 
	\end{definition}
	We will also use the operation $\cdot: \IZ \times G \to G$, $\cdot: (n,x) \mapsto nx$, which is naturally defined on the additive group $(G, +, 0)$ by the following recursive formulas: $0\cdot x=0$, $(n+1)\cdot x=n\cdot x+x$, and $-(n+1)\cdot x = -n \cdot x - x$ for all $n\in\IN\cup\{0\}$. If the group $G$ is abelian, the defined operation $\cdot$ turns it into a $\IZ$-module.
	\begin{definition}
		A \emph{topological group} is a group $(G, +, 0)$ equipped with a topology $\mathcal{T}$ such that the following maps are continuous:
		\begin{enumerate}
		\item the addition (group operation)
		\[
		+ : G \times G \to G, \quad+: (x, y) \mapsto x + y,
		\]
		\item the inversion
		\[
		i : G \to G, \quad i:x \mapsto -x.
		\]
		\end{enumerate}
	\end{definition}
	Unless stated otherwise, all topological groups are assumed to be Hausdorff. For a topological group $(G,\Tau)$, we denote by $\mathcal{T}_0 := \{U \in \mathcal{T}: 0 \in U\}$ the family of open neighborhoods of the identity element $0$ of the topological group. By $\IR$ we denote the additive group of reals equipped with the Euclidean topology $\mathcal{T}_\varepsilon$.
	\begin{definition}
		A map $\phi: S \to G$ between topological groups is called a {\em topological group isomorphism} if it is simultaneously a group homomorphism and a homeomorphism.
	\end{definition}
	If there exists a topological group isomorphism between topological groups $G$ and $H$, we will say that $G$ is {\em isomorphic} to $H$ and write $G \cong H$.

	\begin{definition}
		A topological group $G$ is called \emph{one-parametric} if there exists a surjective continuous group homomorphism $\phi: \IR \to G$ (called a {\em parametrization}).
	\end{definition}
	
	\begin{definition}
		A topological group $(G,+,0,\Tau)$ is called \emph{monothetic} if it contains a dense cyclic subgroup; that is, there exists an element $g \in G$ (called a {\em topological generator}) of $G$ such that $\overline{ \langle g \rangle} = \overline{\{ng: n \in \IZ\}} = G$.
	\end{definition}
	
\begin{definition}
A topological group $(G, +, 0, \mathcal{T})$ is called {\em totally  bounded} if for every open neighborhood $U \in \mathcal{T}_0$ of the identity $0 \in G$, there exists a finite subset $F \subseteq G$ such that
		$$G = \bigcup_{x \in F} (x + U).$$
\end{definition}

For ease of notation, we occasionally use property names to refer to the group topology rather than the topological group (for example, ‘monothetic group topology’). We will use these expressions interchangeably when no confusion can arise.
	
\section{Uniqueness of Parametrization}
Let $(G, +, 0, \mathcal{T})$ be a one-parametric topological group. The following proposition shows that the parametrization $\phi: \IR \to G$ of $G$ is unique in a certain sense, which justifies why the term ``one-parametric group" is applied to $G$ itself rather than to a specific parametrization.
\begin{proposition}
	If $\phi: \IR \to G$ and $\psi: \IR \to G$ are two parametrizations of a (non-trivial) one-parametric topological group $G$, then there exists a (unique) topological group isomorphism $h: \IR \to \IR$ such that $\phi \circ h = \psi$.
\end{proposition}
\begin{proof}
	Observe that $\operatorname{ker}\phi = \{a \in \IR: \phi(a) = 0\}$ is a closed subgroup of $\IR$, which are known to be either $\{0\}$, $\IR$, or $a\IZ$ for some $a \in \IR_{>0}$ (see, e.g., \cite{Morris}, p.~27). 
	
	If $\operatorname{ker}\phi = \IR$, then the surjective homomorphism $\phi$ is trivial, and thus $G = \{0\}$. Hence, any topological group isomorphism $h: \IR \to \IR$ is as required.
	
	If $\operatorname{ker}\phi = a\IZ$, then the topological group $G$ can be identified with the circle group $\IR/a\IZ \cong \mathbb{T}\defeq\{z\in \IC:|z|=1\}$. Hence both $\phi$ and $\psi$ are continuous characters of $\IR$. Thus $\phi(x) = e^{2\pi i \frac{x}{a}}$ and $\psi(x) = e^{2\pi i \frac{x}{b}}$, where $\operatorname{ker}\psi= b\IZ.$ For the identity $\phi \circ h = \psi$ to hold, the map $h: \IR \to \IR$ must satisfy $e^{2\pi i \frac{x}{b}} = e^{2\pi i \frac{h(x)}{a}}$ for all $x\in\IR$. This is equivalent to $h(x) = \frac{a}{b}x + na$ for $n \in \IZ$. Such map is a topological group isomorphism if and only if $n = 0$.  Hence $h: \IR \to \IR$, $h: x \mapsto \frac{a}{b} x$ is a required unique topological group isomorphism.
	
	If $\operatorname{ker}\phi = \{0\}$, then $G$ is not isomorphic to $\mathbb{T}$, and therefore $\operatorname{ker}\psi = \{0\}$ as well. We claim that the map $h: \IR \to \IR$, $h = \phi^{-1} \circ \psi$, is the required unique topological group isomorphism. Observe that $h$ is already a group isomorphism; it remains to show that $h$ is a homeomorphism. 
	
	Write $\IR = \bigcup_{i \in \IN} I_i$ as the union of closed intervals $I_i$. Set $J_i := \phi[I_i]$ and $K_i := \psi[I_i]$. For each $i \in \IN$, the sets $J_i, K_i$ are compact subsets of $G$ and $G = \bigcup_{j, k \in \IN} J_j \cap K_k$. Since $\psi \restriction_{I_k}$ is a homeomorphism, we obtain $\IR =  \bigcup_{j, k \in \IN} \psi^{-1}[J_j \cap K_k]$ and each $\psi^{-1}[J_j \cap K_k]$ is compact. By the Baire Category Theorem there exist indices $j,k \in \IN$ such that $D := \psi^{-1}[J_j \cap K_k] \subseteq I_k$ has non-empty interior. Since $\phi \restriction_{I_j}$ and $\psi \restriction_{I_k}$ are homeomorphisms, the map $h$ is a homeomorphism on the non-empty open set $\operatorname{Int}(D)$. Since $h$ is a group isomorphism, being homeomorphic on a non-empty open set implies that it is a homeomorphism on all of $\IR$. Thus $h$ is indeed a topological  group isomorphism. Finally, uniqueness follows immediately from the fact that $\psi$ and $ \phi$ are bijective. 
\end{proof}

\section{Characterizations of the real line among one-parametric topological groups}\label{s:4}

In this section we prove two characterizations of the real line in the class of all one-parametric topological groups.

\begin{proposition}\label{p:1-par-bounded=>R} Let $G$ be a one-parametric topological group and $\phi:\IR\to G$ be its parametrization. The map $\phi$ is a topological isomorphism if and only if for some nonempty open set $U\subseteq G$ the preimage $\phi^{-1}[U]$ is bounded in the real line $\IR$.
\end{proposition}

\begin{proof} The ``only if'' part is triavial. To prove the ``if'' part, assume that for some nonempty set $U\subseteq G$, the preimage $\phi^{-1}[U]$ is bounded in the real line.  Replacing the set $U$ by its shift $U-u$, where $u\in U$ is any point, we lose no generality assuming that the identity element $0$ of the group $U$ belongs to the open set $U$. Since all nontrivial subgroups of the real line are unbounded, the surjective homomorphism $\phi:\IR\to G$ is injective and hence is bijective.

We will show that $\phi$ is open, that is for all $\e > 0$ the image $\phi[(-\e,\e)]$ is open in $G$. Replacing $\e$ by a smaller positive number, we can assume that $\phi((-\e, \e)) \subseteq U$, by the continuity of $\phi$. Choose any positive real number $a$ such that the bounded set $\phi^{-1}[U]$ is contained in the closed interval $K\defeq[-a,a]\subset \IR$. The compactness of $K$ ensures that the restriction  $\phi\!\restriction_K:K\to\phi[K]\subseteq G$ is a homeomorphism. Hence, $\phi[(-\e;\e)]$ is open  in the subspace $\phi[K] \subseteq G$. Consequently, there exists an open set $W \subseteq G$  such that $\phi[(-\e;\e)] = \phi[K] \cap W$. Since $\phi[(-\e;\e)] \cap U = \phi[(-\e;\e)]$ and $U\subseteq\phi[K]$, we have $\phi[(-\e;\e)] = \phi[K] \cap W \cap U = W \cap U $. The subsets $W$ and $U$ are open in $G$, hence $\phi[(-\e;\e)]$ is also open in $G$.
Therefore, the bijective continuous homomorphism $\phi:\IR\to G$ is open and hence is a topological isomorphism of the topological groups $\IR$ and $G$.
\end{proof}

The following characterization of the real line is the main result of this section.

\begin{theorem}\label{BMT:1}
A topological group $G$ is isomorphic to $\mathbb{R}$ if and only if  $G$ is one-parametric, metrizable, and not monothetic.
\end{theorem}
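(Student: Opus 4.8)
The plan is to prove both implications directly, with essentially all of the work in the ``if'' direction. For ``only if'', observe that $\IR$ is one-parametric (via $\mathrm{id}_\IR$), metrizable (via the usual metric), and not monothetic, since every cyclic subgroup $\IZ g\subseteq\IR$ is either $\{0\}$ or a closed discrete infinite set, hence never dense. For ``if'', fix a surjective continuous homomorphism $\phi\colon\IR\to G$ with $G$ metrizable and not monothetic; I will show $\phi$ is a topological isomorphism. As $\ker\phi$ is a closed subgroup of $\IR$, it equals $\{0\}$, $\IR$, or $c\IZ$ with $c>0$: if $\ker\phi=\IR$ then $G$ is trivial, hence monothetic; if $\ker\phi=c\IZ$ then $\phi$ induces a continuous bijective homomorphism from the compact group $\IR/c\IZ$ onto $G$, necessarily a homeomorphism, so $G$ is a circle and again monothetic. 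Both cases are excluded, so $\ker\phi=\{0\}$ and $\phi$ is a continuous bijective homomorphism; it remains to show $\phi$ is open. Suppose not. Then $\phi^{-1}$ is discontinuous, hence, being a homomorphism, discontinuous at $0$, and first countability of $G$ yields a sequence $(t_n)$ in $\IR$ with $\phi(t_n)\to0$ but $t_n\not\to0$. Replacing $(t_n)$ by a subsequence we may assume $|t_n|\ge\e>0$ for all $n$; then $(t_n)$ is unbounded, since a convergent subsequence would have a nonzero limit $t_*$ with $\phi(t_*)=0$, contradicting injectivity; so after a further passage to a subsequence, $|t_n|\to\infty$.

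Now I use non-monotheticity. For each $s\in\IR\setminus\{0\}$ the subgroup $\overline{\phi(s\IZ)}=\overline{\langle\phi(s)\rangle}$ is proper in $G$, so $\phi^{-1}(\overline{\phi(s\IZ)})$ is a closed subgroup of $\IR$ lying strictly between $s\IZ$ and $\IR$, hence of the form $\frac{s}{k}\IZ$ for some $k=k(s)\in\IN$. Thus $G/\overline{\phi(s\IZ)}$ is a continuous bijective image of the compact group $\IR/\frac{s}{k}\IZ\cong\mathbb T$, hence topologically a circle; composing an isomorphism $G/\overline{\phi(s\IZ)}\cong\mathbb T$ with the quotient map produces a continuous character $\chi_s\colon G\to\mathbb T$ with $\chi_s\circ\phi(t)=e^{2\pi i d_s t}$, where $d_s\in\IR$, $d_s\neq0$, and $d_s s\in\IZ$. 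Since $\phi(t_n)\to0$ and $\chi_s$ is continuous, $e^{2\pi i d_s t_n}\to1$, i.e.\ $\mathrm{dist}(d_s t_n,\IZ)\to0$. Hence the Borel subgroup $D\defeq\{d\in\IR:\mathrm{dist}(dt_n,\IZ)\to0\}$ contains $d_s$ for every $s\neq0$.

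The heart of the proof — and the step I expect to be the main obstacle — is to contradict the coexistence of ``$D$ contains such a $d_s$ for every $s$'' with ``$|t_n|\to\infty$''. I will show $D$ is Lebesgue-null. Fix $M>0$. On one hand the integrand $|e^{2\pi i dt_n}-1|^2$ tends to $0$ pointwise on $D$ and is bounded by $4$, so $\int_{D\cap[-M,M]}|e^{2\pi i dt_n}-1|^2\,dd\to0$ by dominated convergence. On the other hand $|e^{2\pi i dt_n}-1|^2=2-2\cos(2\pi dt_n)$, and $\int_{D\cap[-M,M]}\cos(2\pi dt_n)\,dd\to0$ by the Riemann--Lebesgue lemma because $|t_n|\to\infty$, so the same integral tends to $2\mu(D\cap[-M,M])$. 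Therefore $\mu(D\cap[-M,M])=0$ for all $M$, so $\mu(D)=0$. But from $d_s\neq0$ and $d_s s\in\IZ\setminus\{0\}$ we get $s=(d_s s)/d_s$, whence $\IR\setminus\{0\}\subseteq\{m/d:m\in\IZ\setminus\{0\},\,d\in D\setminus\{0\}\}=\bigcup_{m\in\IZ\setminus\{0\}}m\cdot\{1/d:d\in D\setminus\{0\}\}$; since $x\mapsto1/x$ carries the null set $D\setminus\{0\}$ to a null set, this writes $\IR$ as a countable union of null sets, a contradiction. Hence $\phi$ is open, so a homeomorphism, so a topological group isomorphism, and $G\cong\IR$.

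One could instead try to handle the final step structurally by passing to the Raikov completion $\hat G$, a connected Polish abelian group equal to the closure of a one-parameter subgroup, and invoking Pontryagin duality; but $\hat G$ need not be locally compact a priori, which obstructs the cleanest duality arguments, so the measure-theoretic route seems preferable. Note also that metrizability of $G$ enters only once, but crucially: it is what permits replacing nets by the sequence $(t_n)$ on which the whole estimate depends.
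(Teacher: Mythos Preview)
Your proof is correct, but it proceeds along a genuinely different line from the paper's. The paper never classifies $\ker\phi$ or builds characters; instead it proves, via a Baire category argument, that some nonempty open $U\subseteq G$ has bounded preimage $\phi^{-1}[U]$: taking a countable base $\{\beta_n\}$ of $G$, they show that each set $A_n=\{a\in\IR:\phi[\IZ a]\cap\beta_n\ne\emptyset\}$ is open, and that if every $\phi^{-1}[\beta_n]$ were unbounded then each $A_n$ would also be dense, so $\bigcap_n A_n$ would supply topological generators, contradicting non-monotheticity. From the bounded preimage they read off injectivity and openness of $\phi$ by an elementary compactness argument. Your route instead first disposes of the cases $\ker\phi=\IR$ and $\ker\phi=c\IZ$ by recognising $G$ as trivial or a circle, then, assuming $\phi$ is injective but not open, manufactures for each $s\ne 0$ a continuous character $\chi_s$ of $G$ via the quotient $G/\overline{\phi(s\IZ)}\cong\mathbb T$, and derives a measure-theoretic contradiction using Riemann--Lebesgue on the set of admissible frequencies. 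Both arguments exploit metrizability at a single point (countable base vs.\ sequential continuity). The paper's approach is more elementary---no characters, no Lebesgue measure, no mapping properties of $x\mapsto 1/x$---and has the bonus that it directly yields the sharper statement (recorded after the theorem) that when $G\not\cong\IR$ the topological generators are \emph{dense} in $G$; your argument does not immediately give this. On the other hand, your approach makes the role of the non-monothetic hypothesis more structural, turning it into an abundance of circle quotients and hence of continuous characters, which is a pleasant conceptual payoff.
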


\begin{proof}
	The necessity follows directly from the fact that the real line $\IR$ is metrizable, one-parametric, and not monothetic, since for all $x \in \IR$ the subgroup $\langle x \rangle = \IZ x \not = \IR$ is closed. Now we prove the sufficiency.
	
Assume that a one-parametric topological group $G$ is metrizable and not monothetic, and let $\phi: \IR \to G$ be its parametrization.
	
\begin{claim}\label{cl:bounded} There exists a non-empty open set $U\subseteq G$ with bounded preimage $\phi^{-1}[U]\subseteq\IR$.
\end{claim}

\begin{proof} To derive a contradiction, assume that every nonempty open set $U\subseteq G$ has unbounded preimage $\phi^{-1}[U]$ in the real line $\IR$.
Observe that the topological group $G$ is separable, being a continuous image of the separable space $\IR$. Being separable and metrizable, the topology of $G$ has a countable base $\{ B_n\}_{n \in \mathbb{N}}$ consisting of nonempty open sets.

	
 For each $n \in \mathbb{N}$, consider a subset $A_n = \{a \in \IR: \phi[\IZ\cdot a]\cap B_n \not = \emptyset\}$ of $\IR$. We claim that the set $A_n$ is open in the real line $\IR$.
	
	Let $a \in A_n$. We must prove that there exists a real $\e > 0$ such that the interval $(a-\e, a + \e) \subseteq A_n.$ Since $a \in A_n$, there exists $m \in \IZ$ such that $\phi(ma) \in B_n$ and hence, $ma \in \phi^{-1}[B_n].$ Since the set $\phi^{-1}[B_n]$ is open in $\IR$, there exists a real $\delta > 0$ such that the interval $(ma - \delta, ma + \delta) \subseteq \phi^{-1}[B_n]$. Consider the positive real number $\e := | \frac{\delta}{m} |$ and the interval
	$$m \cdot (a - \e, a + \e) = m \cdot (a - |\tfrac{\delta}{m}|, a + |\tfrac{\delta}{m}|) = \{mx: x \in (a - |\tfrac{\delta}{m}|, a + |\tfrac{\delta}{m }|)\} = (ma - \delta, ma + \delta) \subseteq \phi^{-1}(\beta_n). $$
	Hence for all $x \in (a - \e, a + \e)$ we have $ m \phi(x) = \phi(mx) \in B_n$. Therefore, $(a-\e, a+\e) \subseteq A_n$ and $A_n$ is open in $\IR$.
	
	Now we prove that $A_n$ is dense in $\IR$. Given any $x \in \IR\setminus\{0\}$ and positive $\e<|x|$, it suffices to check that $(x - \e, x + \e) \cap A_n\ne \emptyset$. Consider the intervals $m \cdot (x - \e, x + \e) = (mx - |m|\e, mx + |m|\e)$ and observe that for $m\in\IZ$ with $|m|>|x|/\e$ the inequalities $mx + |m|\e > (m+1)x - |m+1|\e$ and $(m-1)x+|m-1|\e>mx-|m|\e$ holds, which means that consecutive intervals $m\cdot(x-\e,x+\e)$ and $(m\pm 1){\cdot}(x-\e,x+\e)$ overlap, covering all points $y \in E := \IR \setminus (|x|/\e)\cdot[x-\e,x+\e]$. Since $\phi^{-1}[B_n]$ is unbounded in $\IR$, we can find $y \in E \cap \phi^{-1}[B_n]$. Hence $y \in (kx - |k|\e, kx + |k|\e) \cap \phi^{-1}[B_n]$ for some $k \in \IZ.$ Therefore, $y = kb$ for some $b \in (x - \e, x + \e)$ and $kb \in \phi^{-1}[B_n]$. This implies $k\phi(b) = \phi(kb) \in B_n$. Hence, $b \in (x - \e, x + \e) \cap A_n$, proving $A_n$ is dense in $\IR.$
	
	Finally, the intersection $A:= \bigcap_{n \in \IN} A_n$ of countably many dense open sets $A_n \subseteq \IR$ is non-empty and, in fact, dense in $\IR$ by Baire Category Theorem. Observe that for all $x \in A$ we have $\phi[\IZ\cdot x]  \cap B_n \not = \emptyset$ for all $n \in \IN$, which implies  $\overline{\phi[\IZ\cdot x]} = G$. Therefore, $G$ is monothetic. This is a contradiction, completing the proof of Claim~\ref{cl:bounded}. 
	\end{proof}
	
By Claim~\ref{cl:bounded}, some nonempty open set $U \subseteq G$ has bounded preimage $\phi^{-1}[U]$ in $\IR$. By Proposition~\ref{p:1-par-bounded=>R}, the parametrization $\phi:\IR\to G$ is a topological isomorphism, witnessing that the one-parametric group $G$ is isomorphic to the real line.
\end{proof}

The proof of Theorem~\ref{BMT:1} yields a bit more, namely:\

\begin{proposition}
If a one-parametric metrizable topological group $G$ is not isomorphic to $\IR$, then $G$ has dense set of topological generators.
\end{proposition}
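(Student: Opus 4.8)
The plan is to read off the result from the proof of Theorem~\ref{BMT:1}, which really establishes a dichotomy. By Claim~\ref{cl:bounded} and the argument following it, either some nonempty open $U\subseteq G$ has bounded preimage $\phi^{-1}[U]$ in $\IR$ --- in which case $\phi\colon\IR\to G$ is shown to be a topological isomorphism and $G\cong\IR$ --- or every nonempty open subset of $G$ has unbounded $\phi$-preimage. Since $G\not\cong\IR$ by hypothesis, I would argue that we are necessarily in the second alternative.

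In that alternative I would rerun the construction from the proof of Claim~\ref{cl:bounded}. First recall that $G$, being a continuous image of the separable space $\IR$, is separable, and hence, being separable and metrizable, has a countable base $\{\beta_n\}_{n\in\IN}$ of nonempty open sets. For each $n\in\IN$ put $A_n=\{a\in\IR:\phi[\IZ\cdot a]\cap\beta_n\neq\emptyset\}$. The proofs that each $A_n$ is open and dense in $\IR$ use only that $\phi^{-1}[\beta_n]$ is open and unbounded, which is exactly what the second alternative guarantees; so those arguments carry over word for word. By the Baire Category Theorem, $A\defeq\bigcap_{n\in\IN}A_n$ is a dense (in fact $G_\delta$) subset of $\IR$, and every $a\in A$ satisfies $\phi[\IZ\cdot a]\cap\beta_n\neq\emptyset$ for all $n$, whence $\overline{\phi[\IZ\cdot a]}=G$; that is, $\phi(a)$ is a topological generator of $G$.

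The last step is to transport density from $\IR$ to $G$: since $\phi$ is surjective, for every nonempty open $V\subseteq G$ the set $\phi^{-1}[V]$ is nonempty and open in $\IR$, hence meets $A$, so $V$ meets $\phi[A]$; thus $\phi[A]$ is dense in $G$ and consists of topological generators, which is the claim. I do not expect a genuine obstacle here --- the only points requiring care are the observation that the proof of Theorem~\ref{BMT:1} really is a dichotomy whose ``unbounded'' branch already produces the dense $G_\delta$ of generators, and the elementary fact that a continuous surjection sends dense sets to dense sets.
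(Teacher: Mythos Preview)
Your proposal is correct and matches the paper's intended argument exactly: the paper offers no separate proof of the Proposition, merely noting that ``the proof of Theorem~\ref{BMT:1} yields a bit more,'' and what you have written is precisely the natural extraction of that ``bit more'' from the dichotomy implicit in the proof. Your explicit remark that density of $A$ in $\IR$ pushes forward to density of $\phi[A]$ in $G$ via the continuous surjection $\phi$ is a detail the paper leaves tacit, but it is routine and your justification is sound.
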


Theorem \ref{BMT:1} can be rewritten as a characterization of monotheticity in topological groups in the following way.
\begin{corollary}
	A metrizable one-parametric topological group $G$ is monothetic if and only if $G$ is not isomorphic to $\IR$.
\end{corollary}
Metrizability is an essential condition in Theorem \ref{BMT:1}. If it is removed, one obtains the following example of a non-monothetic one-parametric topological group, which is not isomorphic to $\IR$.

\begin{definition} For a topological abelian group $(G,\Tau)$, let $\widehat G$ be the group of all continuous homomorphisms $G\to \IT$ to the circle group $\IT=\{z\in\IC:|z|=1\}$. Elements of the set $\widehat G$ are called {\em characters} on $G$, see \cite{Morris}. The \emph{Bohr topology} on a topological abelian group $(G,\Tau)$ is the smallest topology on $G$ in which all characters $\chi\in\widehat G$ remain continuous. This topology is generated by the subbase consisting of the preimages $\chi^{-1}[U]$ of open sets $U\subseteq\IT$ under characters $\chi\in \widehat G$. By $G^\flat$ we denote the group $G$ equipped with the Bohr topology $\mathcal{T}^\flat$.
\end{definition}

\begin{example}\label{BME:1} \em
	 In particular, the Bohr topology $\Tau_\e^\flat$ of the real line $\IR$ is the smallest topology in which every character
	$$\chi_t(x): \IR \to \mathbb{T}, \quad \chi_t: x \mapsto e^{2\pi itx}$$
	remains continuous. It is known that the topological grup $\IR^{\flat}\defeq(\IR,\Tau_\e^\flat)$ is not metrizable, and therefore is not topologically isomorphic to the real line $(\IR,\Tau_\e)$. Moreover, $\IR^\flat$ is not monothetic. Indeed, for every $a \in \IR\setminus\{0\}$, the continuous character $\chi: \IR \to \mathbb{T}$, $\chi: x \mapsto e^{2\pi ix/a}$, maps the cyclic subgroup $\IZ a$ onto the singleton $\{1\}$, which implies that $\IZ a$ is not dense in $\IR^{\flat}$ for all $a \in \IR^\flat$.
\end{example}

This example motivates the following problems.

\begin{problem}\label{prob1} Find a characterization of the topological group $\IR^\flat$ (among one-parametric topological groups).
\end{problem}

\begin{problem}\label{prob2} Find a characterization of monothetic topological groups among non-metrizable one-parametric topological groups.
\end{problem}

We recall that the {\em weight} $w(X)$ of a topological space $X$ is the smallest cardinarity of a base of the topology of the space $X$. It can be shown that a one-parametric (more generally, separable) topological group is metrizable if and only if it has countable weight. It is known (and easy to see) that the Bohr topology $\Tau_\e^\flat$ on the real line $\IR$ has weight $\mathfrak c$. 

\begin{problem}\label{prob3} Is every non-metrizable one-parametric topological group $G$ of weight $w(G)<\mathfrak c$ monothetic?
\end{problem}

We shall give a partial answer to Problem~\ref{prob3} in Section~\ref{s:3c}.
Problems~\ref{prob1} and \ref{prob2} will be answered in Section~\ref{s:6}, following some preliminary work developed in the Section \ref{s:5}.

\section{The monotheticity of one-parametric topological groups of small weight}\label{s:3c}

In this section we shall provide a partial answer to Problem~\ref{prob3}. 
We recall that a subset $A$ of a topological space $X$ is {\em meager} if $A$ is the countable union of nowhere dense sets in $X$. Let $\mathcal M$ be the family of all meager subsets of the real line. The Baire Theorem ensures that $\IR\notin\mathcal M$. Let $\cov(\mathcal M)$ be the smallest cardinality of a subfamily $\mathcal A\subseteq\M$ such that $\bigcup\A=\IR$. The Baire Theorem ensures that $\cov(\M)\ge\w_1$, where $\w_1$ is the smallest uncountable cardinal. Since $\IR$ is the union of continuum many singletons, $\cov(\M)\le\mathfrak c$. The exact position of the cardinal $\cov(\M)$ in the interval $[\w_1,\mathfrak c]$ is not determined by the ZFC axioms. Martin's Axiom implies $\cov(\M)=\mathfrak c$, but the strict inequality $\cov(\M)<\mathfrak c$ is also consistent, see \cite{BaJu}, \cite{Blass}, \cite{Vaughan}.  

\begin{theorem}\label{t:covM} Every non-metrizable one-parametric topological group $G$ of weight $w(G)<\cov(\M)$ is monothetic.
\end{theorem}

\begin{proof} Let $G$ be a non-metrizable one-parametric topological group of weight $w(G)<\cov(\M)$ and $\phi:\IR\to G$ be its parametrization. By the definition of the cardinal $\kappa\defeq w(G)$, there exists a base $\{B_\alpha\}_{\alpha\in\kappa}$ of the topology of the group $G$, consisting of nonempty open sets in $G$. Since the topological group $G$ is not metrizable, the homomorphism $\phi$ is not a topological isomorphism. Applying Proposition~\ref{p:1-par-bounded=>R}, we conclude that for every $\alpha\in \kappa$, the set $\phi^{-1}[B_\alpha]$ is unbounded in the real line. 
Repeating the argument of the proof of Claim~\ref{cl:bounded}, we can prove that for every $\alpha\in\kappa$, the set $D_\alpha\defeq\{x\in \IR:\IZ{\cdot}x\cap\phi^{-1}[B_\alpha]\ne\varnothing\}$ is open and dense in $\IR$. Then its completement $\IR\setminus D_\alpha$ is closed and nowhere dense in $\IR$. Since $\kappa<\cov(\M)$, the family $\{\IR\setminus D_\alpha\}_{\alpha\in\kappa}$ does not cover the real line. Then there exists a real number $x\in \IR\setminus\bigcup_{\alpha\in\kappa}(\IR\setminus D_\alpha)=\bigcap_{\alpha\in\kappa}D_\alpha$.  For this real number $x$ and its image $y\defeq \phi(x)\in G$, we obtain that $\IZ{\cdot}x\cap\phi^{-1}[B_\alpha]\ne\emptyset$ and hence $\IZ{\cdot}y\cap B_\alpha$ for all $\alpha$. Since $\{B_\alpha\}_{\alpha\in\kappa}$ is a base of the topology of the group $G$, the cyclic subgroup $\IZ{\cdot}y$ is dense in the topological group $G$, witnessing that $G$ is monothetic. 
\end{proof} 

Theorems~\ref{BMT:1} and \ref{t:covM} imply another characterization of the real line in the class of one-parametric topological groups.

\begin{theorem}\label{t:R<=>covM} A topological group $G$ is topologically isomorphic to the real line if and only if $G$ is a non-monothetic one-parametric topological group of weight $w(G)<\cov(\M)$.
\end{theorem}

\begin{problem}  Can the cardinal $\cov(\M)$ in Theorems~\ref{t:covM} and \ref{t:R<=>covM} be replaced by the cardinal $\mathfrak c$?
\end{problem}

\section{The $\IN$-scaled topological groups}\label{s:5}

In this section we define and study $\IN$-scaled topological groups.

\begin{definition} A topological group $(G,+,\Tau)$ is called {\em $\IN$-scaled} if for every $n\in\IN$, the map $G\to G$, $x\mapsto n\cdot x$, is open. 
\end{definition}

\begin{example} The topological groups $\IR,\IR^\flat$, $\IT$ are $\IN$-scaled.
\end{example}

There exists a simple construction (called $\IN$-scaling) transforming any abelian topological group into an $\IN$-scaled topological group. To desribe this construction, we will use the following characterization of an open base at identity, which can be found in \cite[1.3.12]{AT}.

\begin{theorem}\label{ATT}
	Let $G$ be a topological group. A family $\mathcal{U}$ of open subsets of $G$ is a base of the topology of $G$ at the identity $0 \in G$ if and only if the following conditions hold:
	\begin{enumerate}
		\item for every $U \in \mathcal{U}$, there exists $V \in \mathcal{U}$ such that $V + V \subseteq U$;
		\item for every $U \in \mathcal{U}$, there exists $V \in \mathcal{U}$ such that $-V \subseteq U$;
		\item for every $U \in \mathcal{U}$ and $x \in U$, there exists $V \in \mathcal{U}$ such that $V + x \subseteq U$;
		\item for every $U \in \mathcal{U}$ and $x \in G$, there exists $V \in \mathcal{U}$ such that $x + V - x \subseteq U$;
		\item for every $U,V \in \mathcal{U}$, there exists $W \in \mathcal{U}$ such that $W \subseteq U \cap V$;
		\item  $\{0\} = \bigcap \mathcal{U}.$
	\end{enumerate}
Moreover, if $\mathcal U$ is a family of subsets of a group $G$ that satisfies the conditions \textup{(1)--(6)}, then the family $\mathcal{B}_\mathcal{U} = \{g + U: g \in G, U \in \mathcal{U}\}$ is a base for a $T_1$ group topology on $G$. 
\end{theorem}

We recall that for a topological group $(G,+,0,\Tau)$, we denote by $\Tau_0$ the family of all open neighborhoods of the identity element $0$ of the group $G$.

\begin{proposition}
Let $(G, +, 0, \mathcal{T})$ be a topological abelian group. The family $\{g + nU: g \in G, n \in \IN,\; U \in \mathcal{T}_0\}$ is a base for a Hausdorff group topology on $G$.
\end{proposition}

\begin{proof}
	It suffices to prove that the family $\mathcal{U} = \{nU: n \in \IN,\; U \in \mathcal{T}_0\}$ satisfies the conditions from Theorem \ref{ATT}, given that $\mathcal{T}_0$ already satisfies them by the same theorem. We verify the conditions in order:
	\begin{enumerate}
		\item Take arbitrary $nU \in \mathcal{U}$. Since $U \in \mathcal{T}_0$, there exists $V \in \mathcal{T}_0$ with $V + V \subseteq U$. Hence for $nV \in \mathcal{U}$ we have $nV + nV \subseteq nU$ (by the commutativity of the group operation);
		\item Take arbitrary $nU \in \mathcal{U}$. Since $U \in \mathcal{T}_0$, there exists $V \in \mathcal{T}_0$ with $-V \subseteq U$. Hence for $nV \in \mathcal{U}$ we have $-nV \subseteq nU.$
		\item Take arbitrary $nU \in \mathcal{U}$ and $x \in nU$. Then there exists $y \in U$ such that $ny = x$. Since $U \in \mathcal{T}_0$ and $y \in U$, there exists $V \in \mathcal{T}_0$ with $V + y \subseteq U$. Hence for $nV \in \mathcal{U}$ we have $nV + ny = nV + x \subseteq nU.$
		\item Take arbitrary $nU \in \mathcal{U}$ and $x \in G$. Since $G$ is abelian we get $x + nU - x = nU \subseteq nU.$
		\item Take arbitrary $nU, mV \in \mathcal{U}$. Since $U,V \in \mathcal{T}_0$ and the map $\mu_n: G \to G$, $\mu_n: x \mapsto nx$, is continuous for all $n \in \IN$, we can find $W_1, W_2 \in \mathcal{T}_0$ such that $mW_1 \subseteq U$ and $nW_2 \subseteq V$. Put $W:= W_1 \cap W_2$. Then $nmW \subseteq nU$ and $nmW \subseteq mV$. Hence for $nmW \in \mathcal{U}$ we have $nmW \subseteq nU \cap mV$.
		\item $\{0\} \subseteq \bigcap \mathcal{U} \subseteq \bigcap \mathcal{T}_0 = \{0\}$.
	\end{enumerate}
\end{proof}

\begin{definition}
	Let $(G, +, 0, \mathcal{T})$ be a topological abelian group. The group topology $\mathcal{T}'$ generated by the base $\{g + nU: g \in G,\; n \in \IN, U \in \mathcal{T}_0\}$ is called the {\em $\IN$-scaling} of the topology $\mathcal{T}$. By $G_s$ we denote the group $G$ equipped with the topology $\mathcal{T}'$. 
\end{definition}

The definition of the $\IN$-scaling topology $\Tau'$ implies the following useful fact.

\begin{proposition} Let $\Tau_1,\Tau_2$ be two group topologies on an abelian group $G$. If $\Tau_1\subseteq\Tau_2$, then $\Tau_1'\subseteq\Tau_2'$.
\end{proposition}

The associativity of the group operation implies that $n(mx)=(nm)x$ for all $n,m\in\IZ$ and all elements $x$ of a group. This fact implies that the operation of $\IZ$-scaling is idempotent and produces an $\IN$-scaled group topology.

\begin{proposition} For every abelian topological group $(G,\Tau)$, its $\IN$-scaling $G_s\defeq(G,\Tau')$ is an $\IN$-scaled topological group.
\end{proposition}


The following proposition shows that certain properties of divisible topological groups are preserved by $\IN$-scaling. Recall that a group $G$ is {\em divisible} if for all $n\in\IN$ and $g\in G$ there exists an element $x\in G$ such that $n{\cdot}x=g$. 

\begin{proposition}\label{p:mon-tb=>Nscaling} If a divisible topological abelian group $G$ is monothetic (or totally bounded), then so is its $\IN$-scaling $G_s$.
\end{proposition}

\begin{proof} Let $(G,+,0,\Tau)$ be a divisible topological abelian group and let $G_s\defeq(G,+,0,\Tau')$ be its $\IN$-scaling.
\smallskip

1. Assuming that $(G,\Tau)$ is monothetic, fix any topological generator $a$ of $G$. We claim that $a \in G$ remains a topological generator for the topological group $G_s$. It suffices to show that $\IZ{\cdot}a\cap (g + nU) \not = \emptyset$ for all $g \in G$, $n \in \IN$ and $U \in \mathcal{T}_0$. Since $G$ is divisible, there exists $g' \in G$ such that $g = ng'$. Then $g + nU = n(g' + U)$, since $G$ is abelian. As $g' + U \in \mathcal{T}$, there exists $m \in \IN$ such that $ma \in g' + U$. Hence, $nma \in \IZ{\cdot} a \cap (g + nU)$, which shows that $a$ is indeed a topological generator of the topological group $G_s$.
\smallskip

2. Assume that $G$ is totally bounded. Given any nonempty open set $W\in\Tau'$, we need to find a finite set $F \subseteq G$ such that $F+W=G$. By the definition of the $\IN$-scaled topology $\Tau'$, there exist $n\in\IN$, $x\in G$ and $U\in\Tau_0$ such that $x+nU\subseteq W$. Since the group $G$ is divisible, there exists an element $x'\in G$ such that $x=nx'$. 
			Since the topological group $(G,\Tau)$ is totally bounded, there exists a finite set $F' \subseteq G$ such that $G = F'+(x'+U)$. Consider the finite set $F\defeq nF'$ in $G$. The divisibility and commutativity of the group $G$ ensure that
			$$G = nG = n(F'+x'+U)=nF'+nx'+nU=F+x+nU\subseteq F+W.$$Hence, $F=nF'$ is a required finite subset of $G$, witnessing that the topological group $G_s$ is totally bounded.
\end{proof}

\section{A characterization of monothetic one-parametric topological groups}\label{s:6}

By Theorem~\ref{BMT:1}, a metrizable one-parametric group is monothetic if and only if it is not isomorphic to the real line. In this section we characterize non-metrizable monothetic one-parametric groups. First we shall characterize monothetic group topologies on the real line. The standard Euclidean topology on the real line is denoted by $\Tau_\e$; by $\Tau_\e^\flat$ we denote the Bohr topology on $\IR$. 

\begin{theorem}\label{BMT:2}
	A group topology $\mathcal{T} \subseteq \mathcal{T}_\varepsilon$ on $\IR$ is monothetic if and only if its $\IN$-scaling $\mathcal{T}'$ does not contain the Bohr topology $\mathcal{T}_\e^\flat$ of $\IR$.
\end{theorem}
\begin{proof}
	We start by proving the necessity. Suppose $\mathcal{T}$ is monothetic. Then $\mathcal{T}'$ is also monothetic by Proposition \ref{p:mon-tb=>Nscaling}. By Example \ref{BME:1}, the Bohr topology $\Tau_\e^\flat$ on $\IR$ is not monothetic, which implies $\mathcal{T}_\e^\flat \not \subseteq \mathcal{T}'$.
	
	Now we prove the sufficiency. Suppose $\mathcal{T}_\e^\flat \not \subseteq \mathcal{T}'$. By the definition of the Bohr topology, this means there exists $a \in \IR$ such that the character $\chi_a: \IR \to \mathbb{T}$, $\chi_a: x \mapsto e^{2\pi iax}$, is not continuous with respect to $\mathcal{T}'$. Clearly, $a \not = 0$ since the trivial character is continuous for any topology on $\IR$. We claim that $1/a$ is a topological generator of the topological group $(\IR,\Tau)$, meaning that the cyclic subgroup $\IZ/a\defeq\{n/a:n\in\IZ\}$ is $\Tau$-dense in $\IR$. In the opposite case, the closure $H\defeq \overline{\IZ/a}$ of $\IZ/a$ in $(\IR,\Tau)$ is a $\Tau$-closed proper subgroup of the real line. It follows from $\Tau\subseteq\Tau'\subseteq \Tau_\e'=\Tau_\e$ that $H$ is a closed proper subgroup in the topological groups $(\IR,\Tau')$ and $(\IR,\Tau_\e)$, which implies that the group $H$ is cyclic. Then the quotient group $\IR/H$ is isomorphic to the circle group $\IT$ via some topological group isomorphism $h: \IR/H \to \mathbb{T}$. Consider the compostion $p = h \circ q$ of $h$ with the quotient homomorphism $q: \IR \to \IR/H$. The compactness of the group $\IR/H$ implies that the quotient topologies on the group $\IR/H$, genereted by the topologies $\Tau'$ and $\Tau_\e$ coincide. Then $p=h\circ q$ is a $\Tau'$-continuous character on $\IR$. Since $\mathcal{T}' \subseteq \mathcal{T}_\varepsilon$, the character $p$ is $\Tau_\e$-continuous and hence there exists $b \in \IR$ such that $p(x)= \chi_b(x) = e^{2\pi i bx}$ for all $x\in\IR$. Observe that $p[H] = \{1\}$, which implies $e^{2\pi i\frac{b}{a}} = 1$. Hence $\frac{b}{a} \in \IZ$, and therefore $b = na$ for some $n \in \IZ$. Then $p(x) = e^{2\pi inax}$. Since the topological group $(\IR,\Tau')$ is $\IN$-scaled, the map $\IR\to\IR$, $x\mapsto nx$, is open in the topology $\Tau'$ in $\IR$, and hence the map $d_n: \IR \to \IR$, $d_n: x \mapsto x/n$, is continuous with respect to $\mathcal{T}'$. But then the character $\chi_a = p \circ d_n$ is $\Tau'$-continuous, which contradicts the choice of $a$. This contradiction shows that the cyclic group $\IZ/a$ is $\Tau$-dense in $\IR$, witnessing that topological group $(\IR,\Tau)$ is monothetic.
\end{proof}

In the following part we aim to reformulate Theorem \ref{BMT:2} for one-parametric topological groups, without referring to the real line. This will be done with the help of the Borel-Bohr topology.

\begin{definition} A function $f:X\to Y$ between topological spaces is called
\begin{enumerate}
\item {\em Borel} if for every open set $U\subseteq Y$ its preimage $f^{-1}[U]$ is a Borel subset of $X$;
\item {\em $F_\sigma$-measurable} if for every open set $U\subseteq Y$ its preimage $f^{-1}[U]$ is a set of type $F_\sigma$ in $X$.
\end{enumerate}
\end{definition}

We recall that a subset $A$  of a topological space $X$ is {\em of type $F_\sigma$} if $A$ is the contable union of closed sets in $X$. A subset $B$ of a topological space $X$ is {\em Borel} if it belongs to the smallest $\sigma$-algebra that contains all open subsets of $X$.

\begin{proposition}\label{BMP:3}
	Let $G$ be a one-parametric topological group and $\phi: \IR \to G$ be a parametrization of $G$. For a homomorphism $\chi:G\to\IT$, the following conditions are equivalent:
\begin{enumerate}
\item the homomorphism $\chi\circ\phi:\IR\to \IT$ is continuous;
\item the homomorphism $\chi:G\to\IT$ is $F_\sigma$-measurable.
\item the homomorphism $\chi:G\to\IT$ is Borel;
\item the homomorphism $\chi\circ\phi:\IR\to \IT$ is Borel.
\end{enumerate}
\end{proposition}

\begin{proof} $(1)\Ra(2)$ Assume that the homomorphism $h\defeq\chi\circ\phi:\IR\to\IT$ is continuous. Then for every open set $U\subseteq\IT$, the preimage $h^{-1}[U]$ is open and hence $\sigma$-compact in the real line. Since continuous maps preserve $\sigma$-compact sets, the image $\phi[h^{-1}[U]]=\chi^{-1}[U]$ is a $\sigma$-compact subset of the topological group $G$. Since $G$ is Hausdorff, the $\sigma$-compact set $\chi^{-1}[U]$ is of type $F_\sigma$ in $G$, witnessing that the homomorphism $\chi$ is $F_\sigma$-measurable.
\smallskip

The implication $(2)\Ra(3)$ is trivial (because $F_\sigma$-subsets of topological spaces are Borel); the implication $(3)\Ra(4)$ follows immediately from the well-known fact that the composition of two Borel maps is Borel; and the implication $(4)\Ra(1)$ follows from the continuity of Borel homomorphisms between Polish groups, see \cite[9.10+11.5]{Kechris}.
\end{proof}

\begin{definition}
	The {\em Borel-Bohr topology}  on a topological abelian group $(G,\Tau)$ is the smallest topology $\Tau^\natural$ on $G$ in which all Borel homomorphisms from $(G,\Tau)$ to the circle group $\IT$ are continuous. The group $G$ endowed with the Borel-Bohr topology $\Tau^\natural$ will be denoted by $G^\natural$.
\end{definition}

Since every continuous homomorphism is Borel, the Borel-Bohr topology of a topological abelian group contains the Bohr topology of the group. The converse is true for the real line (because every Borel homorphism to the circle group on the real line is continuous). This implies that the Bohr-Borel topology $\Tau_\e^\natural$ on the real line coincides with the Bohr topology $\Tau_\e^\flat$ on $\IR$.

Proposition~\ref{BMP:3} implies the following corollary which shows that the Bohr topology on the real line can be recovered from the Borel-Bohr topology of any one-parametric topological group with injective parametrization. 

\begin{corollary}\label{BMP:4}
	Let $G$ be a one-parametric topological group with an injective parametrization $\phi: \IR \to G$. Then $\phi: \IR^\flat \to G^\natural$ is a topological group isomorphism.
\end{corollary}


The following theorem provides the promised characterization of monothetic topological groups in the class of non-metrizablle one-parametric topological groups.

\begin{theorem}\label{BMT:3} A non-metrizable one-parametric topological group $(G,\Tau)$ is monothetic if and only if the $\IN$-scaling of its topology does not contain the Borel-Bohr topology $\Tau^\natural$ of $(G,\Tau)$.
\end{theorem}

\begin{proof} Let $(G, +, 0, \mathcal{T})$ be a non-metrizable one-parametric topological group and let $\phi: \IR \to G$ be its parametrization.
Assuming that $\phi$ is not injective, we conclude that its kernel $\ker \phi=\phi^{-1}(1)$ is a non-trivial closed subgroup of the real line, which implies that the quotient group $\IR/\ker \phi$ is compact and metrizable and so is the group $G$, which contradicts our assumption. This contradiction shows that the homomorphism $\phi:\IR\to G$ is injective.
Consider the group topology $\Tau_G\defeq\{\phi^{-1}[U]:U\in\Tau\}$ on $\IR$ induced by the continuous bijective homomorphism $\phi:\IR\to G$ and observe that $\phi$ is a topological isomorphism of the topological groups $(\IR,\Tau_G)$ and $(G,\Tau)$. By Proposition~\ref{BMP:3}, a homomorphism $\chi:G\to\IT$ is Borel if and only if the composition $\chi\circ \phi:\IR\to\IT$ is continuous. This implies that a homomorphism $\chi:\IR\to\IT$ is $\Tau_G$-Borel if and only if it is $\Tau_\e$-continuous. Now the definitions of the Bohr and Borel--Bohr topologies ensure that the Bohr topology $\Tau_\e^\flat$ on the real line $\IR$ coincides with the Borel-Bohr topology $\Tau_G^\natural$ on $\IR$. 

Now we are ready to prove the equivalence of the conditions in Theorem~\ref{BMT:3}. If the topological group $(G,\Tau)$ is monothetic, then so is its isomorphic copy $(\IR,\Tau_G)$. By Theorem~\ref{BMT:2}, the $\IN$-scaling $\Tau_G'$ of the topology $\Tau_G$ does not contain the Bohr topology $\Tau^\flat_\e$. Since $\Tau_\e^\flat=\Tau_G^\natural$, the topology $\Tau'_G$ does not contain the Borel-Bohr topology $\Tau_G^\natural$ of the topological group $(G,\Tau)$. Since the topological groups $(\IR,\Tau_G)$ and $(G,\Tau)$ are isomorphic, the $\IN$-scaling $\Tau'$ of the topology $\Tau$ does not contain the Borel--Bohr topology $\Tau^\natural$ of the topological group $(G,\Tau)$. 

Now assume conversely that the $\IN$-scaling $\Tau'$ of the topology $\Tau$ does not contain the Borel-Bohr topology $\Tau^\natural$ of the abelian topological group $(G,\Tau)$. Since the topological groups $(G,\Tau)$ and $(\IR,\Tau_G)$ are isomorphic, the $\IN$-scaling $\Tau_G'$ of the topology $\Tau_G$ does not contain the Borel-Bohr topology $\Tau_G^\natural$ of the topological group $(\IR,\Tau_G)$. Taking into account that $\Tau_G^\natural=\Tau_\e^\flat$, we conclude that the topology $\Tau_G'$ does not contain the Bohr topology $\Tau_\e^\flat$ on the real line $\IR$. Applying Theorem~\ref{BMT:2}, we conclude that the topological group $(\IR,\Tau_G)$ is monothetic and so is its isomorphic copy $(G,\Tau)$.
\end{proof}

Theorem \ref{BMT:3} answers Problem~\ref{prob2}. Now we provide an answer to Problem~\ref{prob1}, presenting a characterization of the topological group $\IR^\flat$.



\begin{theorem} A topological group is isomorphic to the real line endowed with the Bohr topology if and only if it is one-parametric, totally bounded, $\IN$-scaled and not monothetic. 
\end{theorem}

\begin{proof} The ``only if'' part follows from the known properties of the Bohr topology on the real line: it is totally bounded, $\IN$-scaled and not monothetic.

	
	To prove the ``if'' part, assume that a topological group $(G, +, 0, \mathcal{T})$ is one-parametric, totally bounded, $\IN$-scaled, and not monothetic. Let $\phi: \IR \to G$ be a parametrization of $G$. 
	Consider the closed subgroup $\operatorname{ker}\phi$ of $\IR$. Since closed subgroups of $\IR$ are exactly $\{0\}, \IR$, and $a\IZ$ for $a \in \IR_{>0}$, we examine these three cases. 
	
	If $\operatorname{ker}\phi = a\IZ$, then $G \cong \IR/a\IZ \cong \mathbb{T}$, and therefore $G$ is monothetic, a contradiction. If $\operatorname{ker}\phi = \IR$, then $G = \{0\}$, and therefore $0 \in G$ is a topological generator of $G$, again contradicting the assumption on $G$. Thus we conclude that $\phi$ is injective.
	
	The topological group $(G,\Tau)$ is totally bounded and hence it is not isomorphic to the real line. Taking into account that $(G,\Tau)$ is not monothetic, we can apply Theorem~\ref{BMT:1} and conclude that the one-parametric group $(G,\Tau)$ is not metrizable. Taking into account that $(G,\Tau)$ is not monothetic, we can apply Theorem~\ref{BMT:3} and conclude that the $\IN$-scaling $\Tau'$ of the topology $\Tau$ contains the Borel-Bohr topology $\Tau^\natural$ of $(G,\Tau)$. Since the topological group $(G,\Tau')$ is $\IN$-scaled, $\Tau^\natural\subseteq\Tau'=\Tau$. Consider the topology $\Tau_G\defeq\{\phi^{-1}[U]:U\in\Tau\}$ on the real line and observe that $\phi:\IR\to G$ is a topological isomorphism of the topological groups $(\IR,\Tau_G)$ and $(G,\Tau)$. The total boundedness of the topology $\Tau$ implies the total boundedness of the topology $\Tau_G$. The continuity of the homomorphism $\phi:\IR\to G$ implies $\Tau_G\subseteq\Tau_\e$. By \cite[p.~634]{AT}, the totally bounded topology $\Tau_G$ is contained in the Bohr topology $\Tau_\e^\flat$. This means that the map $\phi:\IR^\flat\to G$ is continuous. On the other hand, Corollary~\ref{BMP:4} ensures that $\phi:\IR^\flat\to G^\natural$ is a topological isomorphism, which implies that the identity map $G^\natural\to G$ is continuous and hence $\Tau\subseteq\Tau^\natural$. Taking into account that $\Tau^\natural\subseteq\Tau$, we conclude that $\Tau^\natural=\Tau$. Then $\phi:\IR^\flat\to G$ is a topological isomorphism of the topological groups $\IR^\flat=(\IR,\Tau_\e^\flat)$ and  $(G,\Tau)$.  	
\end{proof}

\end{document}